\newtheorem{theorem}{Theorem}[section]
\newtheorem{lemma}[theorem]{Lemma}
\theoremstyle{definition}
\newtheorem{example}[theorem]{Example}
\theoremstyle{remark}
\newtheorem{remark}[theorem]{Remark}
\numberwithin{equation}{section}
\begin{document}

\title{Hypergeometry and the AGM over Finite Fields}

\author{Eleanor McSpirit}
\address{Department of Mathematics, University of Virginia, Charlottesville, VA 22903}
\email{egm3zq@virginia.edu}
\thanks{E.M acknowledges the support of a UVa Dean's Doctoral Fellowship.}

\author{Ken Ono}
\address{Department of Mathematics, University of Virginia, Charlottesville, VA 22903}
\email{ko5wk@virginia.edu}
\thanks{K.O. thanks the Thomas Jefferson Fund and the NSF (DMS-2002265 and DMS-2055118) for their support.}

\subjclass[2020]{11G20, 14H52, 14K02, 33C90}

\date{}

\begin{abstract}
One of the most celebrated applications of Gauss' $_2F_1$ hypergeometric functions is in connection with the rapid convergence of sequences and special values that arise in the theory of arithmetic and geometric means. This theory was the inspiration for a recent paper \cite{jelly1} in which a finite field analogue of AGM$_\mathbb{R}$ was defined and then studied using finite field hypergeometric functions.  Instead of convergent sequences, one gets directed graphs that combine to form disjoint unions of graphs that individually resemble {\it jellyfish.} Echoing the connection of hypergeometric functions to periods of elliptic curves, these graphs organize elliptic curves over finite fields. Here we use such ``jellyfish swarms" to prove new identities for Gauss' class numbers of positive definite binary quadratic forms. Moreover, we prove that the sizes of jellyfish are in part dictated by the order of the prime above 2 in certain class groups. 
\end{abstract}

\maketitle


\section{Introduction and statement of results}

Recall that the classical \textit{arithmetic-geometric mean} iteration is defined for positive real numbers $a$ and $b$ by the sequence of pairs \[\text{AGM}_\mathbb{R}(a,b) :=\{(a_1,b_1), (a_2,b_2), \dots\},\] where $a_1:=a$, $b_1:=b$, and successive terms are given by
\[a_{n}:= \frac{a_{n-1}+b_{n-1}}{2} \quad \text{ and } \quad b_n:=\sqrt{a_{n-1}b_{n-1}}.
\] 
It is well known that both $(a_n)$ and $(b_n)$ rapidly converge to the same limit (p. 2, \cite{piagm}). One of the most famous results on the AGM$_\mathbb{R}$ is due to Gauss, who showed using the theory of elliptic integrals that one can generate extraordinary approximations for $\pi$ with relatively few iterations by considering the related sequence 
\[p_n := \frac{a_n^2}{1-\sum_{i=1}^n 2^{i-2}(a_i^2-b_i^2)}.
\]

In \cite{jelly1}, Griffin, Saikia, Tsai, and the second author defined a finite-field analogue of the $\text{AGM}_\mathbb{R}$ sequence over $\mathbb{F}_q$ when $q =p^r \equiv 3 \bmod{4}$. In this setting, $-1$ is not a square mod $q$, mirroring the fact that $-1$ is not a square in $\mathbb{R}$. This allows us to choose square roots such that the iterated geometric means are well-defined. That is, there is always a unique choice of $b_n = \sqrt{a_{n-1}b_{n-1}}$ such that $a_nb_n$ is a square mod $q$ when one starts with $a, b \in \mathbb{F}_q^\times$, $a \neq \pm b$, and $ab$ a square mod $q$.

For example, consider $q=7$ and $(a,b) = (4,2)$. Then
\[\text{AGM}_{\mathbb{F}_{7}}(4,2) = \{(4,2), \overline{(3,6), (1,2), (5,3), (4,1), (6,5), (2,4)}, \dots\}
\]
where the overlined pairs form a repeating orbit. The pairs $(6,3)$, $(2,1)$, $(3,5)$, $(1,4)$, and $(5,6)$ also enter this orbit after one AGM iteration. In \cite{jelly1}, the authors first explored the properties of the connected components of the directed graph representing the sequences of $\text{AGM}_{\mathbb{F}_q}(a,b)$ over all admissible pairs $(a,b)$. They showed that all components always consist of one cycle and one ``tentacle" of length one connected to each cycle vertex, for which they coin the name \textit{jellyfish}. For example, the following figure shows the unique connected component of the AGM$_{\mathbb{F}_{7}}$ graph $\mathcal{J}_{\mathbb{F}_{7}}$.
 
\begin{center}
 \begin{tikzpicture}
    \node (A) at (0,1) {\SMALL{(1,2)}};
    \node (A') at (0,2) {\SMALL{(6,3)}};
    \node (B) at (1,.5) {\SMALL{(5,3)}};
    \node (B') at (2,1) {\SMALL{(2,1)}};
    \node (C) at (1,-.5) {\SMALL{(4,1)}};
    \node (C') at (2,-1) {\SMALL{(3,5)}};
    \node (D) at (0,-1) {\SMALL{(6,5)}};
    \node (D') at (0,-2) {\SMALL{(1,4)}};
    \node (E) at (-1,-.5) {\SMALL{(2,4)}};
    \node (E') at (-2,-1) {\SMALL{(5,6)}};
    \node (F) at (-1,.5) {\SMALL{(3,6)}};
    \node (F') at (-2,1) {\SMALL{(4,2)}};

    \path [->] (A') edge node {} (A);
    \path [->] (B') edge node {} (B);
    \path [->] (C') edge node {} (C);
    \path [->] (D') edge node {} (D);
    \path [->] (E') edge node {} (E);
    \path [->] (F') edge node {} (F);
    \path [->] (A) edge node {} (B);
    \path [->] (B) edge node {} (C);
    \path [->] (C) edge node {} (D);
    \path [->] (D) edge node {} (E);
    \path [->] (E) edge node {} (F);
    \path [->] (F) edge node {} (A);
\end{tikzpicture} \hskip.35in  \includegraphics[width=2.25in]{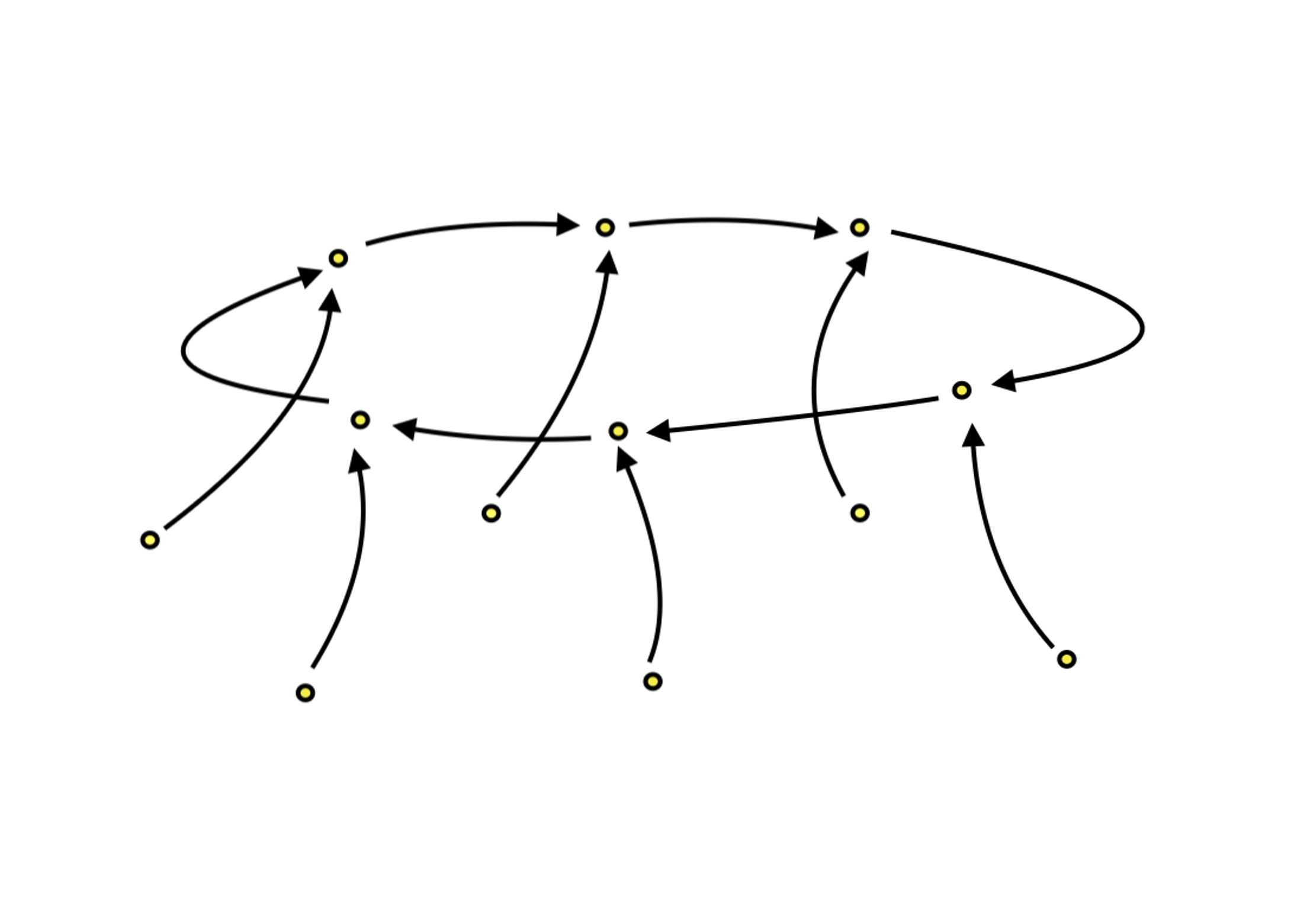}

\textsc{Figure 1}: Example of the jellyfish comprising $\mathcal{J}_{\mathbb{F}_{7}}$.
\end{center}
\medskip

In general, the graph $\mathcal{J}_{\mathbb{F}_{q}}$ consists of many such jellyfish, which together comprise a \textit{swarm}. In general, jellyfish swarms contain jellyfish of varying sizes and multiplicities, as exemplified by the following figure showing the swarm $\mathcal{J}_{\mathbb{F}_{19}}$. 

\begin{center}
\includegraphics[width=\textwidth]{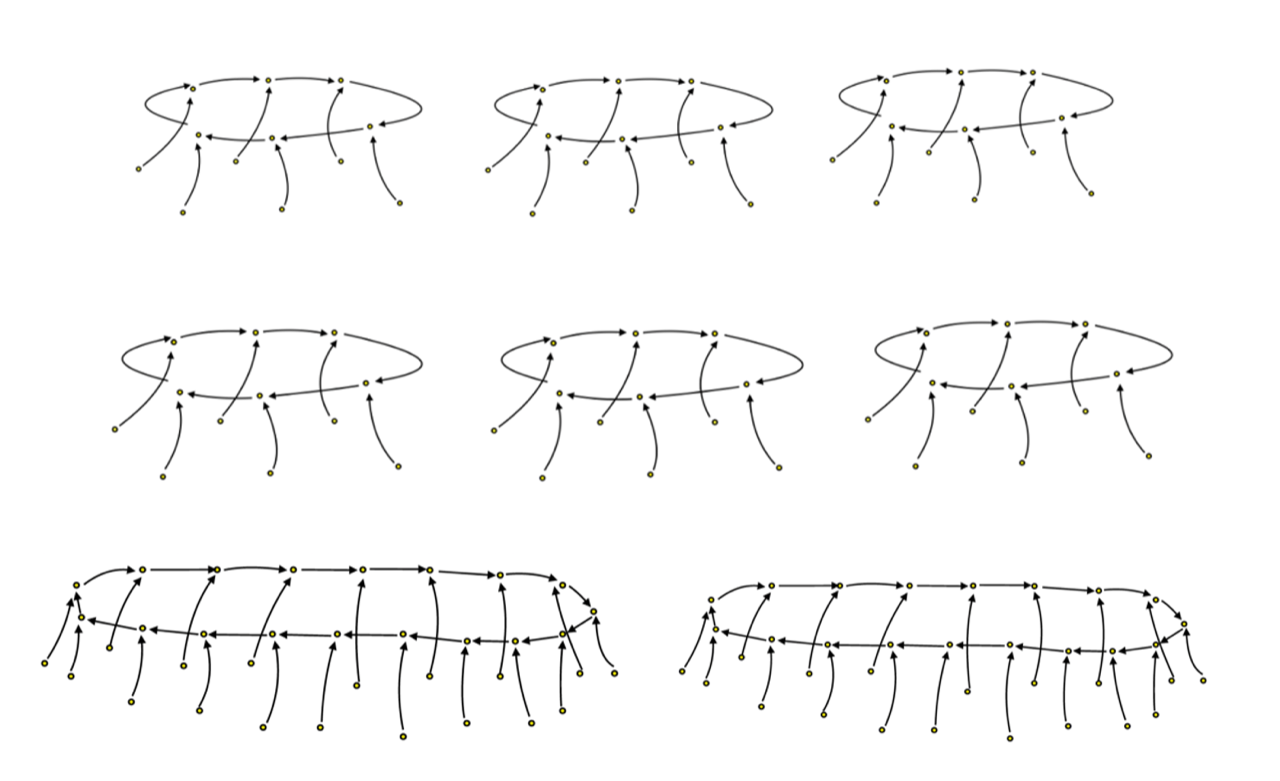}
\textsc{Figure 2}: Example of the jellyfish swarm $\mathcal{J}_{\mathbb{F}_{19}}$.
\end{center}

The theory underlying the classical $\text{AGM}_\mathbb{R}$ involves special integrals and their relationship with Gauss' hypergeometric functions. In particular, for $a>b>0$ we let 
\begin{equation} \label{eqn: ellipticintegral}
I_\mathbb{R}(a,b) := \frac{1}{2a} \int_1^\infty \frac{dx}{\sqrt{x(x-1)(x-(1-b^2/a^2))}}.
\end{equation}
A straightforward check shows that $I_\mathbb{R}(a,b) = I_\mathbb{R}(\frac{a+b}{2},\sqrt{ab})$, and so the sequence $\text{AGM}_\mathbb{R}(a,b) =\{(a_1,b_1), (a_2,b_2), \dots\}$ satisfies
\[I_\mathbb{R}(a_1,b_1) = I_\mathbb{R}(a_2,b_2) = \cdots = I_\mathbb{R}(a_n,b_n)=\dots\]
Gauss discovered the following beautiful formula for $I_\mathbb{R}(a,b)$ in terms of his classical $_2F_1$ hypergeometric function:
\begin{equation}\label{GaussIntegral} I_\mathbb{R}(a,b) = \frac{\pi}{2a} \cdot {_2F_1}^\text{class}\left( \begin{matrix}
 \frac{1}{2} & \frac{1}{2} \\
 & 1 	
 \end{matrix} \ \middle| \ 1- \frac{b^2}{a^2}\right).
\end{equation}
It turns out that these elliptic integrals and hypergeometric functions encode data about elliptic curves; for $\lambda \neq 0 ,1$ we may define the Legendre elliptic curve
\[ E_{\lambda}\colon \quad y^2 = x(x-1)(x-\lambda).\]
It turns out that the real period of $E_\lambda$ is computed by the integral
\[\Omega(E_\lambda) = \int_1^\infty \frac{dx}{\sqrt{x(x-1)(x-\lambda)}}.
\]
Gauss' $_2F_1$ hypergeometric functions offer us a closed formula for the real period of $E_\lambda$ when $0 < \lambda < 1$: 
\begin{equation}\label{eqn: real period} \Omega(E_\lambda) = \pi \cdot {_2F_1}^\text{class}\left( \begin{matrix}
 \frac{1}{2} & \frac{1}{2} \\
 & 1 	
 \end{matrix} \ \middle| \ \lambda \right).
 \end{equation}

Just as the classical $\text{AGM}_\mathbb{R}$ interacts with hypergeometric functions and elliptic curves, so too does the finite field analogue described above. 
In analogy with the elliptic integrals in (\ref{eqn: ellipticintegral}), one can define
\begin{equation}I_{\mathbb{F}_q}(a,b):= \frac{1}{2a} \sum_{x \in \mathbb{F}_q} \phi_q(x)\phi_q(x-1)\phi_q(x-(1-b^2/a^2)),
\end{equation}
where $\phi_q(-)$ denotes the quadratic character over $\mathbb{F}_q$. 
Greene's 1984 PhD thesis \cite{greene} offers us the appropriate analogue of Gauss' hypergeometric functions with which to draw our connection; 
for multiplicative characters $\{\alpha_i\}$, $\{\beta_j\}$ over $\mathbb{F}_q^\times$, he defined
\begin{equation*}\label{eqn: finitedef}
{_nF_{n-1}}\left(\begin{matrix}
\alpha_1 & \alpha_2 & ... & \alpha_n \\
& \beta_1 & ... & \beta_{n-1}
\end{matrix}
\ \middle| \ x \right)_q := \frac{q}{q-1}\sum_{\chi} {\alpha_1\chi\choose \chi}{\alpha_2\chi\choose \beta_1\chi} \cdots {\alpha_n\chi\choose \beta_{n-1}\chi}\chi(x),
\end{equation*}
where ${\alpha \choose \beta}$ is the normalized Jacobi sum $J(\alpha, \beta)$, defined by 
\[ {\alpha \choose \beta} := \frac{\beta(-1)}{q}J(\alpha, \bar{\beta}) := \frac{\beta(-1)}{q} \sum_{x \in \mathbb{F}_q} \alpha(x)\bar{\beta}(1-x).
\]

By comparing their definitions, one finds that for $q=p^r \equiv 3 \bmod{4}$

\begin{equation} I_{\mathbb{F}_q}(a,b) = \frac{q}{2a} \cdot {_2F_1}\left( \begin{matrix}
 \phi_q & \phi_q \\
 & \varepsilon_q	
 \end{matrix} \ \middle| \ 1- \frac{b^2}{a^2} \right)_q.\end{equation}
 This expression is the finite field analogue of (\ref{GaussIntegral}).
 Moreover, a recent result of Evans and Greene (Theorem 2 of \cite{evansgreene}) offers a quadratic transformation law for the finite field ${_2F_1}$ which implies that the sequence 
 $\text{AGM}_{\mathbb{F}_q}(a,b) =\{(a_1,b_1), (a_2,b_2), \dots\}$ satisfies
\[I_{\mathbb{F}_q}(a_1,b_1) = I_{\mathbb{F}_q}(a_2,b_2) = \cdots = I_{\mathbb{F}_q}(a_n,b_n)=\dots\]

 We may again consider the relationship of these hypergeometric functions to Legendre elliptic curves, this time analyzing the $\mathbb{F}_q$-points $E_\lambda(\mathbb{F}_q)$. In this case, instead of an elliptic integral telling us the period of $E_\lambda$, when $\text{char}(\mathbb{F}_q) \geq 5$ we have for $\lambda \in \mathbb{F}_q \backslash \{0,1\}$ that
 \[a_\lambda(q) = -\sum_{x \in \mathbb{F}_q} \phi_q((x)(x-1)(x-\lambda)),\]
 where $a_\lambda(q)$ is the \textit{trace of Frobenius} of $E_\lambda$ (see Theorem 11.10 of \cite{web}). Number theoretically, the trace of Frobenius is essentially a formula for the number of points on an elliptic curve over $\mathbb{F}_q$ since
 \[a_\lambda(q) = q+1 - \#E_\lambda(\mathbb{F}_q).
 \]
 Cast in terms of Greene's hypergeometric functions, we have in analogy with (\ref{eqn: real period})
\begin{equation} a_\lambda(q) = -q \cdot  {_2F_1}\left(\begin{matrix}
\phi_q, & \phi_q\\
& \varepsilon_q
\end{matrix}
\ \Big| \ \lambda \right)_q.
\end{equation}

The connection between the AGM$_{\mathbb{F}_q}$ and elliptic curves over $\mathbb{F}_q$ does not stop here, however. 
The preceding discussion has linked points on the graph of AGM$_{\mathbb{F}_q}$ to Legendre elliptic curves over $\mathbb{F}_q$ and has shown that two curves represented in the same connected component have the same trace of Frobenius, and hence the same number of $\mathbb{F}_q$-points. 
The authors of \cite{jelly1} are able to prove something stronger. They show that given a sequence $\text{AGM}_{\mathbb{F}_q}(a,b) =\{(a_1,b_1), (a_2,b_2), \dots\}$, if $\lambda_n := b_n^2/a_n^2$, then 
\[E_{\lambda_1}(\mathbb{F}_q) \cong E_{\lambda_2}(\mathbb{F}_q) \cong \cdots \cong E_{\lambda_n}(\mathbb{F}_q)\cong \cdots
\]
as abelian groups. 

Not only can the vertices of the AGM$_{\mathbb{F}_q}$ graphs be associated to elliptic curves, but it turns out that each iteration of the arithmetic-geometric mean, and hence each edge of the graph, corresponds to an isogeny of degree 2 between these curves (see Theorem 3 (2) of \cite{jelly1}). 
Following this observation, we can employ
tools from the study of elliptic curves in order to understand the structure of $\text{AGM}_{\mathbb{F}_q}$. Our approach will rely on the theory of complex multiplication. In particular, we will study the action of the class group of an imaginary quadratic order $\mathcal{O}$ on the set of isomorphism classes of elliptic curves over $\mathbb{F}_q$ with complex multiplication by $\mathcal{O}$. 

Our setup in some ways mirrors the robust theory of ``isogeny volcanoes," graphs which organize isomorphism classes of elliptic curves over finite fields and their isogenies. The seminal work on such graphs is due to Kohel\footnote{Kohel did not use this language to describe the graphs he studied. The terminology came later, in a paper by Fouquet and Morain, which applied the work of Kohel to the Schoof-Atkin-Elkies point-counting algorithm \cite{fandm}.}, who studied endomorphism rings of elliptic curves over finite fields by understanding the structure of 
$\ell$-isogeny graphs $\mathcal{G}_\ell(\mathbb{F}_q)$, which are visually quite similar to our jellyfish \cite{kohelthesis}.

However, there are two main differences between our isogeny graphs and the  graphs $\mathcal{G}_2(\mathbb{F}_q)$ studied by Kohel and others. First, only particular 2-isogenies between the represented elliptic curves will appear in our setting. Second, different vertices can correspond to elliptic curves in the same $\mathbb{F}_q$-isomorphism class. What's more, some jellyfish may in fact be identical after identifying nodes with their corresponding elliptic curves. These multiplicities will require attention for all of our results. 

In this note, we first show that $\text{AGM}_{\mathbb{F}_q}$ provides new information in the context of Gauss' theory of class numbers 
 of imaginary quadratic fields and class numbers of positive definite binary quadratic forms. To make this connection, we count the $\mathbb{F}_q$-isomorphism classes of elliptic curves that are represented in these graphs. To this end, the authors of \cite{jelly1} made use of a correspondence between isomorphism classes of elliptic curves with prescribed torsion and certain class numbers to count the number of distinct $j$-invariants which appear. 
 
 Recall that the Hurwitz class number $H(D)$, introduced by Adolf Hurwitz, is a modification of the class number of binary quadratic forms of discriminant $D \leq 0$. If $f$ is a quadratic form, then a matrix 
 \[\begin{bmatrix}
	\alpha & \beta \\
	\gamma & \delta 
\end{bmatrix} \in \text{SL}_2(\mathbb{Z}) \]
is an automorphism of $f$ if $f(\alpha x + \beta y , \gamma x + \delta y) = f(x,y)$. Then $H(D)$ weights forms of discriminant $D$ by $2/g$, where $g$ is the order of their automorphism group. We additionally declare $H(0) = -1/12$. 

 Following the approach of \cite{jelly1}, we obtain new class number formulas that are relatives of classical results like the Hurwitz-Kronecker class number formula
\[\sum_{|t| \leq 2\sqrt{N}}H(4N-t^2)=\sum_{d \mid N} \max\{d, N/d\},\]
which expresses sums of Hurwitz class numbers in terms of divisor sums. The above specializes to \[\sum_{|t| \leq 2\sqrt{p}}H(4p-t^2)=2p\] when $p$ is prime. 
Generalizations of the above were proven by Eichler and Zagier, the latter achieved through the construction of a weight 3/2 non-holomorphic Eisenstein series whose coefficients are Hurwitz class numbers \cite{Eichler}, \cite{Zagier}. 
More recently, Mertens \cite{mertens} analyzed the holomorphic projection of the Rankin-Cohen bracket of the Harmonic Maass form $\mathcal{H}(\tau)$ with certain theta functions in order to obtain weighted class number formulas and their asymptotics as $q \to \infty$. In \cite{OSS} Saad, Saikia, and the second author extended Mertens’ approach in their work on the Sato-Tate distribution for a certain family of $K3$ surfaces.

Additionally, many more such identities were recently proven using the trace formula and the combinatorics of $j$-invariants of elliptic curves over finite fields \cite{HCN}. 

Our addition to this area is the following: 

\begin{theorem}\label{thm: classnumber} 
Let $q=p^r \equiv 3 \bmod{4}$ where $p>3$. The following sums are taken over $t$ such that $|t| \leq 2 \sqrt{q}$.
\begin{enumerate}
\item If $q \equiv 3 \bmod{8}$, then we have
\[ q =3 + 4 \sum_{\substack{(t,q) = 1 \\ t \equiv q+1 (8) }} H\left( \frac{4q-t^2}{4}\right).
\]	
\item If $q \equiv 7 \bmod{8}$, then we have
\[ q = 3+ 4 \cdot \Bigg[ h(-q) + \sum_{\substack{(t,q)=1 \\ t \equiv q+1 (8)}} H\left( \frac{4q-t^2}{4}\right)\Bigg],
\]	
where $h(D)$ is denotes the class number of discriminant $D$. x

\end{enumerate}
\end{theorem}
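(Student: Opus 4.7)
\emph{The plan is} to enumerate $|V(\mathcal{J}_{\mathbb{F}_q})|$ in two different ways. Directly, the pairs $(a,b)\in (\mathbb{F}_q^\times)^2$ with $ab$ a nonzero square split evenly between ``both squares'' and ``both nonsquares,'' yielding $(q-1)^2/2$ pairs; subtracting the $q-1$ diagonal pairs with $a=b$ (and noting that $a=-b$ contributes nothing since $-1$ is a nonsquare mod $q$) gives
\[
|V(\mathcal{J}_{\mathbb{F}_q})| \;=\; \frac{(q-1)(q-3)}{2}.
\]

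On the other hand, I would organize the vertices as fibers over $\mathbb{F}_q$-iso classes of Legendre curves. Since $\lambda=(b/a)^2$, each vertex has $\lambda$ a nontrivial square in $\mathbb{F}_q^\times$, and each such $\lambda$ is attained by exactly $q-1$ vertices. The classical criterion that $(e_i,0)\in 2E_\lambda(\mathbb{F}_q)$ iff both $e_i-e_j$ and $e_i-e_k$ are squares, combined with $q\equiv 3\pmod 4$, shows that whenever $\lambda$ is a nontrivial square, $(0,0)$ is never $2$-divisible while exactly one of $(1,0),(\lambda,0)$ is. Hence the $E_\lambda$ appearing this way are precisely the $\mathbb{F}_q$-iso classes with $E(\mathbb{F}_q)\supseteq \mathbb{Z}/4\oplus\mathbb{Z}/2$. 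Analyzing the $S_3$-action on orderings of the $2$-torsion shows that of the $6$ values in the $S_3$-orbit of $\lambda$, exactly $3$ give Legendre curves $\mathbb{F}_q$-isomorphic to $E$ (the other $3$ give the quadratic twist, via rescalings $x'=u^2x+r$ where $u^2$ is a nonsquare), and among these $3$, exactly $2$ are nontrivial squares. Thus each such iso class accounts for exactly two admissible $\lambda$-values, so $(q-1)(q-3)/2 = 2(q-1)\cdot N$, giving $N=(q-3)/4$.

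The final step decomposes $N$ by trace of Frobenius. The structure $E(\mathbb{F}_q)\supseteq \mathbb{Z}/4\oplus\mathbb{Z}/2$ forces $t\equiv q+1\pmod 8$. For ordinary curves, setting $\alpha:=(\pi-1)/2$, one verifies that $\alpha$ is an algebraic integer with $\mathrm{disc}(\mathbb{Z}[\alpha])=-(4q-t^2)/4$, and that $\mathrm{End}(E)\supseteq\mathbb{Z}[\alpha]$ is equivalent to full rational $2$-torsion. A short calculation of $\mathrm{Tr}(\alpha)$ and $\mathrm{Nm}(\alpha)$ modulo $2$ shows that under the parity hypothesis $\alpha$ acts on $E[2]$ with nontrivial kernel, automatically upgrading full $2$-torsion to the $\mathbb{Z}/4\oplus\mathbb{Z}/2$ structure. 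By Deuring's theorem the ordinary count with trace $t$ then equals $H((4q-t^2)/4)$. For $q\equiv 3\pmod 8$, trace-zero supersingular classes fail the parity, yielding case (1) directly. For $q\equiv 7\pmod 8$, trace zero satisfies the parity but fails $(t,q)=1$; among the two possibilities $\mathrm{End}(E)\in\{\mathbb{Z}[\pi],\mathcal{O}_K\}$ for supersingular curves, only $\mathcal{O}_K$ contains $(\pi-1)/2$, contributing the extra $h(-q)$ term.

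\emph{The hard part} is the CM-theoretic bookkeeping: matching the Hurwitz weighting in $H((4q-t^2)/4)$ with the weighted count of iso classes whose endomorphism ring contains $\mathbb{Z}[\alpha]$, and auditing the ``two admissible $\lambda$ per iso class'' at the special $j$-invariants $0$ and $1728$, where the $S_3$-orbit of $\lambda$ degenerates under extra automorphisms and the Hurwitz weighting must compensate accordingly.
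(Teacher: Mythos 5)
Your proposal is correct and follows essentially the same route as the paper: both double-count the vertices of $\mathcal{J}_{\mathbb{F}_q}$ (equivalently, the admissible $\lambda$'s), matching the direct count $(q-1)(q-3)/2$ against a fibration by trace $t\equiv q+1\pmod 8$, with $H\bigl((4q-t^2)/4\bigr)$ counting $j$-invariants per trace, exactly two square $\lambda$'s per $\mathbb{F}_q$-isomorphism class, multiplicity $q-1$ per $\lambda$, and the supersingular correction $h(-q)$ appearing only when $q\equiv 7\pmod 8$. The only difference is that you inline proofs of the ingredients (the $\mathbb{Z}/4\oplus\mathbb{Z}/2$ structure, the $S_3$-orbit analysis, and the Schoof-style $\mathbb{Z}[(\pi-1)/2]$ criterion) that the paper imports from \cite{jelly1} and \cite{schoof}.
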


\begin{remark}
The expressions in the two cases of Theorem \ref{thm: classnumber} have the same form for prime $q$. 
\end{remark}

There are many questions one can immediately ask about AGM$_{\mathbb{F}_q}$ once the ``jellyfish swarm" structure is known. For example, how many jellyfish $\mathcal{J}_i$ appear in a swarm? How large do we expect jellyfish to be?

Interestingly, these questions are not easily answered. To start, the number of jellyfish varies greatly across prime powers $q$. For example, we give the following table showing some values of $d(q)$, the number of jellyfish in $\mathcal{J}_{\mathbb{F}_q}$:
\vspace{.08in}
\begin{center}
\begin{tabular}{|c || c | c | c | c | c | c | c | c | c | c | c | c | c |} 
\hline
	$q$ & 7 & 11 & 19 & 23 & 27 & 31 & 43 & 47 & $\cdots$  & 161047 & 161051 & 161059 &  161071\\
	\hline
	$d(q)$ & 1 & 3 & 8 & 5 & 39 & 10 & 7 & 4 & $\cdots$  & 6499 & 25558635 & 4902 & 33744\\
	\hline
\end{tabular}
\vspace{.05in}

\textsc{Figure 3}: Selected values of $d(q)$ for prime powers $q$.
\vspace{.1in}
\end{center}

These questions are complicated by the fact that the sizes of jellyfish within a single swarm can vary widely. For example, for $q=161051=11^5$, the swarm $\mathcal{J}_{\mathbb{F}_{q}}$, one has tiny jellyfish of size 10 alongside those of massive size 7500. We offer the following table showing the size disparities between the smallest and largest jellyfish, which we denote by $\min(q)$ and $\max(q)$ respectively, for small values of $q$:
\vspace{.08in}
\begin{center}
\begin{tabular}{|c || c | c | c | c | c | c | c | c | c | c | c | c | c |}  
\hline
	$q$ & 7 & 11 & 19 & 23 & 27 & 31 & 43 & 47 & 59  & 67 & 71 & 79 & 83\\
	\hline
	$\min(q)$ & 12 & 10 & 12 & 22 & 6 & 30 & 42 & 230 & 174  & 18 & 28 & 52 & 410 \\
	\hline
	$\max(q)$ & 12 & 20 & 36 & 132 & 12 & 60 & 168 & 276 & 348  & 396 & 280 & 390 & 820 \\
	\hline
\end{tabular}
\vspace{.05in}

\textsc{Figure 4}: Selected values of $\min(q)$, $\max(q)$ for prime powers $q$.
\vspace{.1in}
\end{center}

Despite this complicated behavior, the authors of \cite{jelly1} use the fact that the trace of Frobenius is constant on a jellyfish to give a lower bound for $d(q)$ as a function of $q$, namely if $\varepsilon > 0$ then for sufficiently large $q$ we have
$$d(q) \geq \left(\frac{1}{2} -\varepsilon\right) \sqrt{q}$$ by counting the number of traces that must be obtained by curves in $\mathcal{J}_{\mathbb{F}_q}$.

In addition to size considerations for $d(q)$, one can also ask about the sizes of individual jellyfish. Here we show that these sizes are related to the algebraic properties of the endomorphism rings $\text{End}(E)$  of elliptic curves and their class groups (see Section 2 for background and definitions). Using Theorem \ref{thm: classnumber} and the theory of complex multiplication, we are able to show the following: 

\begin{theorem}\label{thm: jellysize}
Let $q =p^r \equiv 3 \bmod{4}$ where $p>3$. Suppose $(a,b)$ satisfies the conditions to appear in the AGM$_{\mathbb{F}_q}$ graph on the jellyfish $\mathcal{J}$, and let $\lambda := b^2/a^2$. Let $\# \mathcal{J}$ denote the number of vertices in $\mathcal{J}$. If $\mathcal{O} := \emph{End}(E_\lambda)$ and $h_2(\mathcal{O})$ denotes the order of $ [\mathfrak{p}_2]$ in $cl(\mathcal{O})$, where $\mathfrak{p}_2$ is a prime above $(2)$ in $\mathcal{O}$, then we have
\[ 2 \cdot h_2(\mathcal{O}) \Bigm| \#\mathcal{J}.
\]	
Additionally, if $m(\mathcal{J})$ denotes the multiplicity with which a jellyfish appears, then \[m(\mathcal{J}) \cdot \# \mathcal{J} = 2(q-1)\cdot h_2(\mathcal{O}).\]
\end{theorem}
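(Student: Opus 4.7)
The plan is to translate the AGM dynamics into the class-group action on CM elliptic curves via the Deuring correspondence. By Theorem 3(2) of \cite{jelly1}, each AGM edge $(a,b)\to((a+b)/2,\sqrt{ab})$ realizes a specific $2$-isogeny between $E_{\lambda}$ and $E_{\lambda'}$ with $\lambda=b^{2}/a^{2}$. All vertices of a jellyfish $\mathcal{J}$ correspond to curves with a common endomorphism ring $\mathcal{O}$, and iterating AGM around the cycle iterates one fixed $2$-isogeny, which under Deuring is the action of a prime $\mathfrak{p}_{2}$ above $(2)$ in $\mathcal{O}$. The consistent choice between $\mathfrak{p}_{2}$ and its conjugate is pinned down by the sign convention making $a_{n}b_{n}$ a square.

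For the divisibility $2h_{2}(\mathcal{O})\mid\#\mathcal{J}$: since $cl(\mathcal{O})$ acts freely on the $\mathbb{F}_{q}$-isomorphism classes of curves with endomorphism ring $\mathcal{O}$, returning to the starting pair after $n$ cycle steps forces $[\mathfrak{p}_{2}]^{n}=1$, so $h_{2}(\mathcal{O})\mid n$. A brief preimage check shows that each cycle vertex $(a',b')$ has exactly two AGM-preimages --- the cycle-predecessor $(a,b)$ and its swap $(b,a)$ --- and by the uniqueness of successors along the cycle, the swap cannot also lie on the cycle. Hence the tentacles are precisely the swaps, $\#\mathcal{J}=2n$, and $2h_{2}(\mathcal{O})\mid\#\mathcal{J}$.

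For the identity $m(\mathcal{J})\cdot\#\mathcal{J}=2(q-1)\,h_{2}(\mathcal{O})$, the strategy is a double count of admissible pairs whose curve lies in the $\mathfrak{p}_{2}$-orbit represented by $\mathcal{J}$. First, for each square $\lambda\in\mathbb{F}_{q}\setminus\{0,1\}$ there are exactly $q-1$ admissible pairs $(a,b)$ with $b^{2}/a^{2}=\lambda$, since the two candidates $b=\pm\sqrt{\lambda}\,a$ give products $ab$ differing by a factor of $-1$ and exactly one is a square. Next, running the $S_{3}$-action on the root set $\{0,1,\lambda\}$ enumerates the six Legendre parameters $\lambda'$ with $j(\lambda')=j(\lambda)$; each carries a scaling factor $\alpha$ such that the change of variables $E_{\lambda}\to E_{\lambda'}$ is $\mathbb{F}_{q}$-rational exactly when $\alpha\in(\mathbb{F}_{q}^{\times})^{2}$. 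Imposing both admissibility ($\lambda'\in(\mathbb{F}_{q}^{\times})^{2}$) and the $\mathbb{F}_{q}$-isomorphism criterion, a case check leaves exactly two survivors, namely $\lambda$ and $1/\lambda$; the remaining four parameters force either $\sqrt{-1}\in\mathbb{F}_{q}$ or the incompatible pair of conditions $1-\lambda,\lambda-1\in(\mathbb{F}_{q}^{\times})^{2}$. Hence each iso class in the orbit contributes $2(q-1)$ pairs, and summing over the $h_{2}(\mathcal{O})$ iso classes in the orbit (distributed among the $m(\mathcal{J})$ copies of $\mathcal{J}$) yields $m(\mathcal{J})\cdot\#\mathcal{J}=2(q-1)\,h_{2}(\mathcal{O})$.

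The main obstacle is ensuring the uniform count of two admissible $\mathbb{F}_{q}$-iso Legendre parameters per iso class, because the special $j$-values $0$ and $1728$ admit extra automorphisms that can collapse some of the six $S_{3}$-parameters, and in the supersingular setting $2$ may divide the conductor of $\mathcal{O}$ so that the invertibility of $\mathfrak{p}_{2}$ needs to be argued separately. I expect these subtleties either not to arise (for instance $\lambda=-1$ is never a square when $q\equiv 3\pmod 4$) or to succumb to a direct verification, and I anticipate using Theorem \ref{thm: classnumber} as an independent cross-check on the global admissible-pair count $(q-1)(q-3)/2$.
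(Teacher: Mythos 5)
Your proposal is correct and follows essentially the same route as the paper: identify each AGM edge with the action of the class $[\mathfrak{p}_2]$ of a norm-$2$ ideal on $\mathrm{Ell}_{\mathcal{O}}(\mathbb{F}_q)$, deduce $h_2(\mathcal{O})\mid n$ from the freeness of that action on the cycle (with the factor of $2$ coming from the length-one tentacles), and obtain the product formula by counting the $2(q-1)$ admissible pairs attached to each of the $h_2(\mathcal{O})$ isomorphism classes in the $\langle[\mathfrak{p}_2]\rangle$-orbit, exactly as in the paper's use of Lemma \ref{lem: lambdas} and Theorem \ref{thm: counts}. The only step you leave implicit is why \emph{every} jellyfish meeting that orbit is a copy of $\mathcal{J}$ (so that the double count really equals $m(\mathcal{J})\cdot\#\mathcal{J}$); the paper settles this by observing that after $n = \mathrm{ord}([\mathfrak{p}_2])$ cycle steps one returns to a scalar multiple $(ka,kb)$, so all jellyfish in the orbit are $\mathbb{F}_q^\times$-scalings of one another.
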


\begin{example}
We illustrate the theorem with $q=271$. 
\vspace{.08in}
\begin{center}
\begin{tabular}{| c | c | c | c | c | c | }
	\hline
	$\mathcal{J}$ & $t$  & $h(\mathcal{O})$ & $h_2(\mathcal{O})$ & $\# \mathcal{J}$ & $m(\mathcal{J})$\\
	\hline
	$\mathcal{J}_1$ & $-32$  & 2 & 2 & $540$ & 2\\
	$\mathcal{J}_2$ & $-24$  &  5 & 5 & $900$ & 3\\
	$\mathcal{J}_3$ & $-16$  &  6 & 6 & $1620$ & 2\\
	$\mathcal{J}_4$ & $-16$  & 3 & 3 & $810$ & 2 \\
	$\mathcal{J}_5$ & $-8$  & 12 & 6 & $1620$ & 2\\
	$\mathcal{J}_6$ & $-8$  & 12 & 6 & $1620$ & 2\\
	$\mathcal{J}_7$ & $0$  &  11 & 11 & $2970$ & 2 \\
	$\mathcal{J}_8$ & $8$  &  12 & 6 & $1620$ & 2\\
	$\mathcal{J}_9$ & $8$ &  12 & 6 & $1620$ & 2 \\
	$\mathcal{J}_{10}$ & $16$  & 6 & 6 & $1620$ & 2\\
	$\mathcal{J}_{11}$ & $16$  &  3 & 3 & $810$ & 2\\
	$\mathcal{J}_{12}$ & $24$  &  5 & 5 & $2700$ & 1\\
	$\mathcal{J}_{13}$ & $32$  &  2 & 2 & $108$ & 10\\
	\hline
\end{tabular}
\vspace{.1in}
\end{center}
\end{example}

\begin{remark}
While $\displaystyle{2 \cdot h_2(\mathcal{O}) \mid\#\mathcal{J}}$, these two numbers are in general unequal, as illustrated in the previous example. The quantity $\displaystyle{\#\mathcal{J}/ 2 \cdot h_2(\mathcal{O})}$ is a divisor of $q-1$ whose appearance we will explain in Section \ref{subsection: taxonomy}. For now, it suffices to remark that it is not determined by the traces or endomorphism rings of the elliptic curves comprising $\mathcal{J}$. 
\end{remark}

This paper is organized as follows. Section \ref{subsection: endo} will recall necessary background from the theory of complex multiplication that will allow us to study vertices and edges in $\mathcal{J}_{\mathbb{F}_q}$ using class groups. In Section \ref{subsection: taxonomy}, we give background from \cite{jelly1} on the taxonomy of jellyfish. In particular, we characterize the number of vertices in a swarm, as well as the multiplicity of each isomorphism class of elliptic curves appearing on a jellyfish. In Section 4, we prove Theorems \ref{thm: classnumber} and \ref{thm: jellysize}.

\section{Complex Multiplication}\label{subsection: endo}

We first recall the classical theory of complex multiplication of elliptic curves  over $\mathbb{C}$ (for example, see Chapter 2 of \cite{silvermanadvanced}):
Let $\mathcal{O}$ be an order in an imaginary quadratic field. If $\mathfrak{a}$ is an invertible $\mathcal{O}$-ideal, then the torus $\mathbb{C}/\mathfrak{a}$ corresponds to an elliptic curve $E(\mathbb{C})$ with complex multiplication by $\mathcal{O}$. Equivalent ideals correspond to isomorphic elliptic curves, and we have a bijection between the ideal class group $cl(\mathcal{O})$ and the set 
\[\text{Ell}_{\mathcal{O}}(\mathbb{C}) := \{ j(E/\mathbb{C})\mid \text{End}(E) \cong \mathcal{O}\}
\]
of $j$-invariants of elliptic curves over $\mathbb{C}$ with complex multiplication by $\mathcal{O}$. 

Further, another invertible $\mathcal{O}$-ideal $\mathfrak{b}$ uniquely determines a separable isogeny of degree $N(\mathfrak{b})$ with kernel 
\[E[\mathfrak{b}] := \{P \in E \mid \alpha \cdot P = O \text{ for all } \alpha \in \mathfrak{b}\}\]
such that the target curve also has multiplication by $\mathcal{O}$. One can check that principal ideals act trivially, and that this defines a faithful $cl(\mathcal{O})$-action on $\text{Ell}_\mathcal{O}(\mathbb{C})$. 

While this correspondence is pleasing, we are concerned with the case where $\mathbb{C}$ is replaced by the finite field $\mathbb{F}_q$. It turns out that the story in this setting is largely the same. 

Let $E$ be an ordinary elliptic curve over $\mathbb{F}_q$, and let $\pi_E$ denote the Frobenius endomorphism of $E$. One may compute the trace of Frobenius to be $t = q+1 - \# E(\mathbb{F}_q)$. Using the characteristic equation for $\pi_E$, one derives the \textit{norm equation} 
\[t^2 -4q = v^2D_K,\] where $v^2D_K$ is the discriminant of the imaginary quadratic order $\mathbb{Z}[\pi_E]$ and $D_K$ is the discriminant of its field of fractions $K$. Then if $\mathcal{O} := \text{End}(E/\mathbb{F}_q)$, we have 
\[\mathbb{Z}[\pi_E] \subseteq \mathcal{O} \subseteq \mathcal{O}_K,\]
and $\mathcal{O}$ has discriminant $u^2 D_K$, where $u = [\mathcal{O}_K : \mathcal{O}]$ divides $v =[\mathcal{O}_K : \mathbb{Z}[\pi_E]]$. 

Now, consider the set \[\text{Ell}_t(\mathbb{F}_q) := \{j(E/\mathbb{F}_q) \mid \text{tr}(\pi_E) = t\}\]
of $\overline{\mathbb{F}}_q$-isomorphism classes of elliptic curves over $\mathbb{F}_q$ with trace of Frobenius $t$. Tate's Isogeny Theorem \cite{tate} implies that $\text{Ell}_t(\mathbb{F}_q)$ determines an isogeny class. Further, as $K$ is determined by $t$ and $q$, this set can be written as the disjoint union 
\[\text{Ell}_t(\mathbb{F}_q) = \bigsqcup_{\mathbb{Z}[\pi_E] \subseteq \mathcal{O} \subseteq \mathcal{O}_K} \text{Ell}_{\mathcal{O}}(\mathbb{F}_q),
\]
where $\text{Ell}_{\mathcal{O}}(\mathbb{F}_q)$ is defined in the same way as $\text{Ell}_{\mathcal{O}}(\mathbb{C})$. 
As a consequence of the Deuring Lifting Theorem, the norm equation implies that over $\mathbb{F}_q[x]$, the Hilbert class polynomial $H_{u^2D_K}(x)$ of degree $h(\mathcal{O})$ splits completely and the roots are precisely the set $\text{Ell}_\mathcal{O}(\mathbb{F}_q)$.
Then so long as $\text{Ell}_\mathcal{O}(\mathbb{F}_q)$ is nonempty, the set has cardinality $h(\mathcal{O})$.

Recall the definition of the \textit{Hurwitz class number} for an imaginary quadratic order $\mathcal{O}$:
\begin{equation} H(\mathcal{O}) :=\sum_{\mathcal{O} \subseteq \mathcal{O}' \subseteq \mathcal{O}_K} h(\mathcal{O}')
\end{equation}
If $D$ is the discriminant of $\mathcal{O}$, we may define $H(D) := H(\mathcal{O})$. This agrees with the definition given earlier. From the results above we immediately have that the cardinality of $\text{Ell}_t(\mathbb{F}_q)$ is equal to $H(t^2-4q)$. 

As in the characteristic 0 case, we again have a faithful action of $cl(\mathcal{O})$ on $\text{Ell}_\mathcal{O}(\mathbb{F}_q)$. What's more, if $\varphi \colon E \to E'$ is an isogeny of degree $\ell$ such that $\mathcal{O} := \text{End}(E) = \text{End}(E')$, then $\varphi$ results from the action of an invertible $\mathcal{O}$-ideal $\mathfrak{l}$ of norm $\ell$.
This action is what will ultimately allow us to compare edges in $\mathcal{J}_{\mathbb{F}_q}$ with elements of class groups.

\section{Taxonomy of Jellyfish}\label{subsection: taxonomy}

Here we give several necessary results on the structure of jellyfish swarms. We first recall results from \cite{jelly1} about the vertices in a jellyfish swarm. 
\begin{theorem}[Theorems 1 (2) and 3 (1) of \cite{jelly1}] 
\label{thm: counts} The following are true.
\begin{enumerate} 
\item The jellyfish swarm $\mathcal{J}_{\mathbb{F}_q}$ has $(q-3)(q-1)/2$ vertices. 
\item Each $E_\lambda$ for $\lambda \in \mathbb{F}_q^{\times 2} \backslash \{0,1\}$ appears exactly $q-1$ times.  
\end{enumerate}
\end{theorem}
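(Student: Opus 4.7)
The plan is to derive both assertions as direct combinatorial counts of the admissible pairs $(a,b) \in \mathbb{F}_q^\times \times \mathbb{F}_q^\times$ satisfying $a \neq \pm b$ and $ab \in \mathbb{F}_q^{\times 2}$. The essential input throughout is that since $q \equiv 3 \pmod{4}$, the element $-1$ is a non-square in $\mathbb{F}_q$, so $a$ and $-a$ always lie in opposite cosets of $\mathbb{F}_q^{\times 2}$.

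For part (1), I would fix $a \in \mathbb{F}_q^\times$ and count the admissible partners $b$. Since $ab$ is a nonzero square precisely when $b$ lies in the same coset of $\mathbb{F}_q^{\times 2}$ as $a$, there are $(q-1)/2$ candidates. This coset contains $b = a$ (because $a^2$ is a square) but not $b = -a$ (because $-a^2$ is a non-square). Excluding the single forbidden value $b = a$ leaves $(q-1)/2 - 1 = (q-3)/2$ admissible partners for each of the $q-1$ choices of $a$, yielding the desired total $(q-1)(q-3)/2$.

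For part (2), fix $\lambda \in \mathbb{F}_q^{\times 2} \setminus \{0,1\}$ and choose a square root $\mu$ with $\mu^2 = \lambda$. Any pair satisfying $b^2/a^2 = \lambda$ has the form $(a, \epsilon a \mu)$ with $\epsilon \in \{\pm 1\}$. The square condition $ab \in \mathbb{F}_q^{\times 2}$ becomes $\epsilon \mu \in \mathbb{F}_q^{\times 2}$, and once again since $\mu$ and $-\mu$ lie in opposite cosets, exactly one value of $\epsilon$ works. The separation constraint $b \neq \pm a$ reduces to $\lambda \neq 1$, which is part of the hypothesis. So each $a \in \mathbb{F}_q^\times$ contributes exactly one admissible partner, giving a total of $q-1$ vertices representing $E_\lambda$.

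There is no deep obstacle here; the only point requiring care is tracking the quadratic character of $-1$, which is what drives both the $b = a$ exclusion (but not $b = -a$) in part (1) and the uniqueness of the sign $\epsilon$ in part (2). As a consistency check, one can recover (1) from (2) by noting that the map $(a,b) \mapsto b^2/a^2$ sends the admissible pairs onto $\mathbb{F}_q^{\times 2} \setminus \{0,1\}$, a set of size $(q-3)/2$, with each fiber having size $q-1$ by (2); multiplying recovers the count $(q-1)(q-3)/2$.
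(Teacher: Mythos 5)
Your proposal is correct and matches the approach the paper indicates for this cited result: part (1) is exactly the count of admissible pairs, and your parametrization of the fiber over $\lambda$ as $\{(a,\epsilon a\mu) : a \in \mathbb{F}_q^\times\}$ is just the scaling orbit $(a,b)\mapsto(ka,kb)$ that the paper invokes, written out explicitly. The bookkeeping with the non-squareness of $-1$ (excluding $b=a$ but not $b=-a$, and pinning down the sign $\epsilon$) is handled correctly.
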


The proof of (1) counts the number of admissible pairs $(a,b)$, and the proof of (2) relies on the fact that if $(a,b)$ corresponds to an elliptic curve $E_\lambda$, then so does every pair $(ka,kb)$ for $k \in \mathbb{F}_q^\times$. 

The above imply that the number of distinct $\lambda$ that occur in $\mathcal{J}_{\mathbb{F}_q}$ is $(q-3)/2$. However, we have alternative ways of counting the $\lambda$ that appear using the tools developed in Section 3 of \cite{jelly1}. 
In particular, recall that the $j$-invariants parameterize $\overline{\mathbb{F}}_q$-isomorphism classes of elliptic curves over $\mathbb{F}_q$, and that the trace and $j$-invariant uniquely determine an isomorphism class over $\mathbb{F}_q$. Further, if two elliptic curves have the same $j$-invariant, then they are either isomorphic over $\mathbb{F}_q$ or they are nontrivial quadratic twists of one another, in which case their traces differ by a sign. 
 If we are able to count the possible traces and the possible $j$-invariants corresponding to each trace, as well as the number of distinct $\lambda$ corresponding to each $j$-invariant, we will be able to get an alternative formula for the number of $\lambda$ which occur. 
 
 We first characterize the admissible traces of the elliptic curves $E_\lambda$. By Lemma 2 of \cite{jelly1}, one has that as an abelian group, $\mathbb{Z}/2\mathbb{Z} \times \mathbb{Z}/4\mathbb{Z} \subseteq E(\mathbb{F}_q)$. Then $8 \mid \# E(\mathbb{F}_q)$, and in particular $t \equiv q+1 \bmod 8$. We further know that every trace is represented so long as the Hasse bound $|t| \leq 2\sqrt{q}$ is satisfied (see \cite{Deuring} or \cite{ruck}). 

Now we fix a trace $t$. Since $\text{Ell}_t(\mathbb{F}_q)$ defines an isogeny class, every elliptic curve on a jellyfish must have the same trace. Then consider the set of jellyfish with trace $t$
 and define $M_{\mathbb{F}_q}(t)$ to be the number of distinct $j$-invariants across the union of these jellyfish. The authors of \cite{jelly1} prove the following:

\begin{theorem}[Theorem 6 of \cite{jelly1}]\label{thm: hurwitz3mod8}
Suppose $q \equiv 3 \bmod{8}$ and $p > 3$. If $|t| \leq 2\sqrt{q}$ such that $(t,p)=1$ and $t \equiv q+1 \bmod{8}$, then we have 
\[H\left(\frac{t^2-4q}{4}\right) = M_{\mathbb{F}_q}(t).
\]	
\end{theorem}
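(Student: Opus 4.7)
The plan is to identify $M_{\mathbb{F}_q}(t)$ with the number of $j$-invariants in $\text{Ell}_t(\mathbb{F}_q)$ whose curves satisfy $\mathbb{Z}/2\mathbb{Z}\times\mathbb{Z}/4\mathbb{Z}\subseteq E(\mathbb{F}_q)$, recast this torsion condition as a condition on $\text{End}(E)$, and then invoke the Deuring--Hurwitz correspondence from Section~\ref{subsection: endo} to recognize the count as $H((t^2-4q)/4)$.

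First I would establish a bijection between the jellyfish $j$-invariants of trace $t$ and those elements of $\text{Ell}_t(\mathbb{F}_q)$ whose associated curves satisfy $\mathbb{Z}/2\mathbb{Z}\times\mathbb{Z}/4\mathbb{Z}\subseteq E(\mathbb{F}_q)$. One inclusion is Lemma~2 of \cite{jelly1}: every $E_\lambda$ that appears on a jellyfish has this torsion. For the converse, suppose $E/\mathbb{F}_q$ has full rational $2$-torsion and a rational $4$-torsion point $P$ doubling to a distinguished $2$-torsion point $T$. By the classical $2$-descent criterion, both $x$-coordinate differences from $T$ to the other two $2$-torsion points are squares in $\mathbb{F}_q$. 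A Legendre labeling argument then gives $E\cong E_\mu$ over $\mathbb{F}_q$ for some $\mu\in\mathbb{F}_q^{\times 2}\setminus\{0,1\}$: placing $T$ at $(0,0)$ is obstructed by $-1\notin\mathbb{F}_q^{\times 2}$, but placing $T$ at either $(1,0)$ or $(\mu,0)$ yields a valid $\mathbb{F}_q$-rational model in which $\mu$ is a square.

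Next I would translate the torsion condition into an endomorphism ring condition. Since $q\equiv 3\bmod 8$ gives $v_2(q-1)=1$, writing $E(\mathbb{F}_q)\cong\mathbb{Z}/m\mathbb{Z}\times\mathbb{Z}/n\mathbb{Z}$ with $m\mid q-1$ forces $v_2(m)\leq 1$. Combined with $t\equiv q+1\bmod 8$, which forces $8\mid\#E(\mathbb{F}_q)$, the condition $\mathbb{Z}/2\mathbb{Z}\times\mathbb{Z}/4\mathbb{Z}\subseteq E(\mathbb{F}_q)$ is equivalent to full rational $2$-torsion $E[2]\subseteq E(\mathbb{F}_q)$, and this in turn is equivalent to $\pi_E-1$ factoring through $[2]$, that is $(\pi_E-1)/2\in\text{End}(E)$. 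A direct trace-norm computation shows $(\pi_E-1)/2$ is an algebraic integer under our hypotheses and generates the imaginary quadratic order $\mathcal{O}'$ of discriminant $(t^2-4q)/4$, so the torsion condition becomes $\mathcal{O}'\subseteq\text{End}(E)$.

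Finally, Tate's isogeny theorem and Deuring lifting as recalled in Section~\ref{subsection: endo} give
\[
M_{\mathbb{F}_q}(t) = \sum_{\mathcal{O}'\subseteq\mathcal{O}\subseteq\mathcal{O}_K} \#\text{Ell}_\mathcal{O}(\mathbb{F}_q) = \sum_{\mathcal{O}'\subseteq\mathcal{O}\subseteq\mathcal{O}_K} h(\mathcal{O}) = H\!\left(\frac{t^2-4q}{4}\right),
\]
since each $\text{Ell}_\mathcal{O}(\mathbb{F}_q)$ is nonempty of cardinality $h(\mathcal{O})$ thanks to $\mathbb{Z}[\pi_E]\subseteq\mathcal{O}'\subseteq\mathcal{O}$. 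The main obstacle will be the converse in the first step: verifying that among the six orderings of the three $2$-torsion points, at least one yields an $\mathbb{F}_q$-rational Legendre model whose parameter $\mu$ is a nonzero square different from $1$. This requires comparing square classes of the six pairwise differences of $2$-torsion $x$-coordinates, using the non-squareness of $-1$ to rule out labelings that place $T$ at $(0,0)$ while exhibiting valid labelings placing $T$ at $(1,0)$ or $(\mu,0)$.
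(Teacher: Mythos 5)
Your proposal is correct and follows essentially the same route the paper sketches for this quoted result: identify the jellyfish $j$-invariants with curves having full rational $2$-torsion (equivalently, Legendre form with square $\lambda$, via the $2$-descent/halving criterion for the converse), convert that torsion condition into the divisibility $(\pi_E-1)/2\in\operatorname{End}(E)$, i.e.\ removal of a factor of $2$ from the conductor as in \cite{schoof}, and then sum $h(\mathcal{O})$ over the orders between the discriminant-$(t^2-4q)/4$ order and $\mathcal{O}_K$ to recover the Hurwitz class number. You simply supply more of the details (the square-class bookkeeping among the root differences and the computation of $\operatorname{disc}\,\mathbb{Z}[(\pi_E-1)/2]$) that the paper defers to \cite{jelly1}.
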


\begin{remark}
The careful reader may note that our characterization of Theorem 6 of \cite{jelly1} includes the extra assumption that $(t,p)=1$. This is not strictly necessary in the case where $q \equiv 3 \bmod{8}$, as all appearing traces will have this property, but will be important for the case where $q \equiv 7 \bmod{8}$ and so we include it here to avoid possible confusion. 
\end{remark}

The above relies primarily on the correspondence between the number of distinct $j$-invariants of elliptic curves with trace $t$ and the Hurwitz class number $H(t^2-4q)$ discussed in Section \ref{subsection: endo} with the extra observation that the removal of a factor of $2$ from the conductor $u$ is equivalent to the requirement that $E[2] \subset E(\mathbb{F}_q)$ \cite{schoof}, which must be satisfied as all of our elliptic curves admit a Legendre normal form. Since $q \equiv 3 \bmod{4}$, one can also show that every elliptic curve over $\mathbb{F}_q$ is of the form $E_\lambda$ for $\lambda \in \mathbb{F}_q^{\times 2} \backslash \{0,1\}$ by 2-descent.

By the same arguments as that which prove Theorem \ref{thm: hurwitz3mod8}, we get the following for the ordinary traces when $q \equiv 7 \bmod{8}$:

\begin{theorem}
Suppose $q \equiv 7 \bmod{8}$ and $p > 3$. If $|t| \leq 2\sqrt{q}$ such that $(t,p)=1$ and $t \equiv q+1 \bmod{8}$, then we have
\[H\left(\frac{t^2-4q}{4}\right) = M_{\mathbb{F}_q}(t)
\]
\end{theorem}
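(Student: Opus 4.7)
The plan is to transport the proof of Theorem \ref{thm: hurwitz3mod8} essentially verbatim. The only role of $q \bmod 8$ in that argument was in constraining the admissible traces: when $q \equiv 3 \pmod 8$ the congruence $t \equiv q+1 \pmod 8$ automatically forces $t \not\equiv 0 \pmod p$, whereas when $q \equiv 7 \pmod 8$ it gives $t \equiv 0 \pmod 8$ and thus admits the supersingular trace $t=0$. The added hypothesis $(t,p)=1$ excludes this possibility: since $p>3$, every supersingular trace over $\mathbb{F}_q$ is divisible by $p$, so coprimality with $p$ is exactly ordinariness. After this observation, the rest of the argument runs unchanged.

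First, I would identify the set of $j$-invariants on jellyfish with trace $t$ as exactly the $j$-invariants of $\mathbb{F}_q$-isomorphism classes of elliptic curves with $\text{tr}(\pi_E)=t$ and $E[2] \subseteq E(\mathbb{F}_q)$. Constancy of the trace on each jellyfish and the Legendre normal form $y^2 = x(x-1)(x-\lambda)$ of any jellyfish curve give one inclusion. The reverse inclusion is the $2$-descent observation recorded in \cite{jelly1}: because $q \equiv 3 \pmod 4$, any elliptic curve over $\mathbb{F}_q$ with full rational $2$-torsion is of the form $E_\lambda$ for some $\lambda \in \mathbb{F}_q^{\times 2} \setminus \{0,1\}$.

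Next, I would invoke the complex multiplication dictionary of Section \ref{subsection: endo}. Ordinariness, guaranteed by $(t,p)=1$, yields $|\text{Ell}_t(\mathbb{F}_q)| = H(t^2-4q)$. Schoof's criterion \cite{schoof} identifies the condition $E[2] \subseteq E(\mathbb{F}_q)$ with the requirement that the conductor $u = [\mathcal{O}_K : \text{End}(E)]$ divide $v/2$, i.e.\ with the removal of one factor of $2$ from $u$. Re-summing $h(\mathcal{O})$ over this restricted collection of orders then converts $H(t^2-4q)$ into $H((t^2-4q)/4)$, yielding the identity $M_{\mathbb{F}_q}(t) = H((t^2-4q)/4)$.

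There is no genuinely new obstacle: the entire content beyond Theorem \ref{thm: hurwitz3mod8} is the verification that $(t,p)=1$ is the correct hypothesis to secure ordinariness in the $q \equiv 7 \pmod 8$ setting. All of the computations with Hurwitz class numbers, the Hilbert class polynomial, and the Schoof conductor criterion are uniform in $q \bmod 8$, so once ordinariness is in hand, the proof of Theorem \ref{thm: hurwitz3mod8} transports line for line.
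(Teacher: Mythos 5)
Your proposal is correct and follows the paper's own route: the paper proves this case by declaring it identical to the argument for Theorem \ref{thm: hurwitz3mod8}, with the hypothesis $(t,p)=1$ serving precisely to exclude the supersingular trace $t=0$ (which is now admissible since $t \equiv q+1 \equiv 0 \bmod 8$), that case being handled separately in Lemma \ref{lem: supersingular}. Your identification of $M_{\mathbb{F}_q}(t)$ with $j$-invariants of ordinary curves having full rational $2$-torsion, followed by the conductor-halving via Schoof's criterion to pass from $H(t^2-4q)$ to $H((t^2-4q)/4)$, is exactly the mechanism the paper invokes.
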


We now turn to the case where $E$ is supersingular (i.e. where $t=0$, as we shall see). While we need to amend the correspondence to avoid orders whose conductors are not coprime to $p$, we end up with a similar result: 

\begin{lemma} \label{lem: supersingular}
Suppose $q \equiv 7 \bmod{8}$ and $p > 3$. Then we have $h(-q) = M_{\mathbb{F}_q}(0)$.
\end{lemma}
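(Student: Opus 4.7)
The argument parallels that of the preceding theorem, with modifications for the supersingular setting. First I would identify $t=0$ as the unique admissible supersingular trace. Since $q \equiv 7 \bmod 8$ cannot be a perfect square, we have $q = p^r$ with $r$ odd; Waterhouse's classification of supersingular Weil numbers (for $p>3$) then leaves $t=0$ as the only supersingular option, automatically satisfying $t \equiv q+1 \bmod 8$. By the 2-descent argument invoked in the preceding theorem, every elliptic curve over $\mathbb{F}_q$ admits a Legendre form, so $M_{\mathbb{F}_q}(0)$ equals the number of supersingular $j$-invariants in $\mathbb{F}_q$ coming from curves with trace zero.

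Next I would adapt the Deuring correspondence from Section~\ref{subsection: endo} to the supersingular regime. For such an $E$, $\mathrm{End}_{\mathbb{F}_q}(E)$ is an order $\mathcal{O}$ satisfying $\mathbb{Z}[\pi_E] = \mathbb{Z}[\sqrt{-q}] \subseteq \mathcal{O} \subseteq \mathcal{O}_K$ inside $K = \mathbb{Q}(\sqrt{-q})$. Waterhouse's theorem further requires that $\mathcal{O}$ be $p$-maximal in $\mathcal{O}_K$. Since the conductor of $\mathbb{Z}[\sqrt{-q}]$ in $\mathcal{O}_K$ is $2p^{(r-1)/2}$, this $p$-maximality constraint restricts $\mathcal{O}$ to have conductor $1$ or $2$ in $\mathcal{O}_K$. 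The additional condition $E[2] \subseteq E(\mathbb{F}_q)$, which (as noted following the preceding theorem) is equivalent to the removal of a single factor of $2$ from the conductor of $\mathbb{Z}[\pi_E]$, then eliminates the conductor-$2$ option and forces $\mathcal{O} = \mathcal{O}_K$.

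Finally, the faithful action of $\mathrm{Cl}(\mathcal{O}_K)$ on $\mathrm{Ell}_{\mathcal{O}_K}(\mathbb{F}_q)$, together with the Deuring Lifting Theorem, places the eligible $j$-invariants in bijection with $\mathrm{Cl}(\mathcal{O}_K)$, giving $M_{\mathbb{F}_q}(0) = h(\mathcal{O}_K) = h(-q)$.

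\emph{Main obstacle.} The crux is the supersingular Deuring--Waterhouse analysis: unlike the ordinary case, not every order between $\mathbb{Z}[\pi_E]$ and $\mathcal{O}_K$ is realized as $\mathrm{End}_{\mathbb{F}_q}(E)$, and the $p$-maximality condition must be reconciled with the $E[2]$-rationality condition so that a single order $\mathcal{O}_K$ emerges. A secondary concern is correctly tracking the interaction between quadratic twists -- which preserve $t=0$ but can interchange the two candidate endomorphism rings $\mathcal{O}_K$ and $\mathbb{Z}[\sqrt{-p}]$ -- when translating the ideal class count into a $j$-invariant count.
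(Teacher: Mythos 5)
Your proposal is correct and follows essentially the same route as the paper: the published proof also pins down $\mathrm{End}(E)$ by combining Waterhouse's Theorem 4.1 (a trace-zero curve has endomorphism ring of conductor prime to $p$) with the rational $2$-torsion condition via Schoof's Proposition 3.7 (namely $\mathcal{O} \supseteq \mathcal{O}(-q)$), which is precisely your elimination of the conductor-$2$ option, leaving a unique admissible order whose class number gives $M_{\mathbb{F}_q}(0)$. The only difference is presentational --- you spell out the conductor bookkeeping and the reduction to $t=0$ that the paper compresses into two sentences.
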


\begin{proof} 
When $t=0$, the endomorphism ring of $E$ can be identified with an imaginary quadratic order in $\mathbb{Q}(\pi_E)$ with conductor prime to $p$ (Theorem 4.1, \cite{waterhouse}) containing $\mathcal{O}(-q)$ (Proposition 3.7, \cite{schoof}). 
Since $q$ is a power of $p$, there is only one such order, $\mathcal{O}(-q)$ itself. 
\end{proof}

Now, to justify our focus on these particular traces, we offer the following classification of the traces of elliptic curves over $\mathbb{F}_q$:

\begin{theorem}[Theorem 4.1 of \cite{waterhouse}]
	Let $q=p^k$ be a power of a prime $p$. Let $t \in \mathbb{Z}$ and let $N=q+1-t$. The integer $N$ is the cardinality of $E(\mathbb{F}_q)$ for some elliptic curve $E/\mathbb{F}_q$ if and only if one of the following conditions is satisfied:
	\begin{enumerate}
	\item $|t| \leq 2\sqrt{q}$ and $(t,p) = 1$;
	\item $k$ is odd and $t=0$;
	\item $k$ is odd, $t = \pm \sqrt{pq}$, and $p=2$ or 3;
	\item $k$ is even, $t = 0$, $p \not\equiv 1 \bmod{4}$;	
	\item $k$ is even, $t = \pm \sqrt{q}$, $p \not\equiv 1 \bmod{3}$;
	\item $k$ is even, $t = \pm 2\sqrt{q}$.
	\end{enumerate}
\end{theorem}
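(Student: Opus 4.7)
The plan is to prove both directions of the equivalence: necessity (if $N=\#E(\mathbb{F}_q)$ then one of (1)--(6) holds) and sufficiency (each listed pair is realized). The main tools are Hasse's bound, the characteristic polynomial of Frobenius $f(T)=T^2-tT+q$ with roots $\pi,\bar\pi$ satisfying $\pi\bar\pi=q$ and $|\pi|=\sqrt{q}$, Deuring's lifting theorem for ordinary curves, and Honda--Tate theory classifying which $q$-Weil numbers come from simple abelian varieties of dimension one.

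For necessity, I would first apply Hasse's theorem to get $|t|\leq 2\sqrt{q}$ and then split on whether $\gcd(t,p)=1$. The ordinary case is exactly (1). In the supersingular case $p\mid t$, either $f$ is reducible, forcing $\pi=\pm\sqrt{q}\in\mathbb{Q}$, $k$ even, and $t=\pm 2\sqrt{q}$ (case (6)), or $K:=\mathbb{Q}(\pi)$ is imaginary quadratic. In the latter case, $v_{\mathfrak{p}}(\pi)+v_{\mathfrak{p}}(\bar\pi)=k$ for each prime $\mathfrak{p}\mid p$ of $K$, and the Honda--Tate condition that the local invariants of the endomorphism algebra have common denominator exactly $2$ severely restricts the admissible Weil numbers. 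A case analysis on the splitting behavior of $p$ in $K$, combined with the parity of $k$, narrows the possibilities down to precisely (2)--(5).

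For sufficiency, in case (1) I would take any valid $t$, let $\pi$ be a root of $f$, observe that $\mathbb{Z}[\pi]$ is an order in $K$, and apply Deuring's theorem: this produces an elliptic curve with CM by $\mathbb{Z}[\pi]$ over a number field whose good reduction at a prime above $p$ yields an ordinary elliptic curve over $\mathbb{F}_q$ with trace $t$. For cases (2)--(6), the Honda--Tate invariant computation shows that the simple abelian variety attached to the prescribed $\pi$ is one-dimensional; in low characteristic one can exhibit models explicitly (for instance $y^2+y=x^3$ in characteristic $2$ realizes case (3) for odd $k$, and $y^2=x^3-x$ realizes case (4) when $p\equiv 3\pmod 4$).

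The main obstacle will be the supersingular analysis, since one must verify in each of (2)--(5) that the stated congruence on $p$ (or the parity of $k$) is \emph{exactly} what forces the Honda--Tate invariants of $\pi$ at the primes of $K$ above $p$ and at $\infty$ to sum to zero modulo $1$ with common denominator $2$. The same invariant computation, read in reverse, excludes every remaining $t$ with $p\mid t$, thereby completing the necessity side as well.
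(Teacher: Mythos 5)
This statement is quoted directly as Theorem 4.1 of Waterhouse and the paper offers no proof of it, so there is nothing internal to compare against; your proposal should be judged as a reconstruction of Waterhouse's own argument, and in outline it is exactly that: Hasse's bound plus the ordinary/supersingular dichotomy, Deuring lifting for case (1), and the Honda--Tate classification of $q$-Weil numbers to pin down the supersingular traces. The structure is sound and complete as a plan. One technical point to fix before executing it: your phrase ``the local invariants of the endomorphism algebra have common denominator exactly $2$'' is the correct condition only in the subcase where $f$ is reducible, $\pi=\pm\sqrt{q}\in\mathbb{Q}$, and $\mathrm{End}^{0}(E)$ is the quaternion algebra ramified at $p$ and $\infty$ (case (6)). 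In the subcases (2)--(5), where $K=\mathbb{Q}(\pi)$ is imaginary quadratic, Honda--Tate gives $2\dim A = e\cdot[K:\mathbb{Q}]$ with $e$ the least common denominator of the invariants $\mathrm{inv}_{\mathfrak{p}} = \bigl(v_{\mathfrak{p}}(\pi)/v_{\mathfrak{p}}(q)\bigr)[K_{\mathfrak{p}}:\mathbb{Q}_p]$, so forcing $\dim A=1$ requires $e=1$, i.e.\ that all the invariants be \emph{integral}; demanding denominator $2$ there would instead produce an abelian surface. It is precisely this integrality requirement, checked against the splitting of $p$ in $K$ and the parity of $k$, that produces the congruence conditions $p\not\equiv 1\pmod 4$ and $p\not\equiv 1\pmod 3$ in (4) and (5), so the distinction matters for the case analysis you describe.
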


Since $q \equiv 3 \bmod{4}$ and $p > 3$, the only options are (1) and (2), meaning the above completely classify the cases where $M_{\mathbb{F}_q}(t)$ can be nonzero. 
Now that we have determined the number of $j$-invariants appearing in $\mathcal{J}_{\mathbb{F}_q}$ using class numbers, we return to the question of how many $\lambda$ correspond to a particular $j$. We offer the following answer:

\begin{lemma} \label{lem: lambdas}
	Let $L_{\mathbb{F}_q}(t,j)$ denote the number of distinct $\lambda$ such that $E_\lambda$ has trace $t$ and $j$-invariant $j$. Then for all pairs $(t, j)$ such that $L_{\mathbb{F}_q}(t,j) \neq 0$, we have $L_{\mathbb{F}_q}(t,j)= 2$.
\end{lemma}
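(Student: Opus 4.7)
Since $L_{\mathbb{F}_q}(t,j)\neq 0$, there is at least one $E_{\lambda_0}$ realizing $(t,j)$; the associated $\mathbb{F}_q$-isomorphism class $E$ has all $2$-torsion rational, so
\[ E\colon y^2=(x-e_1)(x-e_2)(x-e_3), \qquad e_1,e_2,e_3\in\mathbb{F}_q.\]
My plan is to enumerate all Legendre models of $E$ by specifying an ordered pair $(e_i,e_j)$ of distinct $2$-torsion roots to be placed at $(0,1)$, so that the remaining root sits at the Legendre parameter
\[ \lambda_{ij} := \frac{e_k-e_i}{e_j-e_i}, \qquad \{i,j,k\}=\{1,2,3\}.\]
The six ordered pairs produce the $S_3$-orbit of $\lambda_{12}$ (possibly with coincidences when $j\in\{0,1728\}$) under the standard action on Legendre parameters.

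The coordinate change realizing this normalization, namely $(x,y)\mapsto\bigl((x-e_i)/(e_j-e_i),\,y/(e_j-e_i)^{3/2}\bigr)$, is defined over $\mathbb{F}_q$ if and only if $e_j-e_i\in\mathbb{F}_q^{\times 2}$; otherwise $E_{\lambda_{ij}}$ is the quadratic twist of $E$ and so has trace $-t$. Independently, $\lambda_{ij}\in\mathbb{F}_q^{\times 2}$ iff $\chi(e_j-e_i)=\chi(e_k-e_i)$. Combining both requirements, an ordered pair $(i,j)$ contributes a $\lambda$ to $L_{\mathbb{F}_q}(t,j)$ precisely when \emph{both} $e_j-e_i$ and $e_k-e_i$ are squares in $\mathbb{F}_q^\times$.

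Set $s_i := |\{j\neq i \mid e_j-e_i\in\mathbb{F}_q^{\times 2}\}|$. Since $\chi(-1)=-1$, for each unordered pair $\{i,j\}$ exactly one of $e_j-e_i, e_i-e_j$ is a nonzero square, which gives $s_1+s_2+s_3=3$ with each $s_i\in\{0,1,2\}$. The contributing ordered pairs are exactly those based at an $i$ with $s_i=2$, of which there are $2\cdot|\{i:s_i=2\}|$. The only partitions of $3$ into three parts from $\{0,1,2\}$ are $(2,1,0)$ and $(1,1,1)$; the hypothesis $L\neq 0$ rules out the latter, leaving exactly one $s_i=2$ and thus two good ordered pairs $(i,j),(i,k)$. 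Their Legendre parameters $\lambda_{ij}$ and $\lambda_{ik}=1/\lambda_{ij}$ are distinct: $\lambda_{ij}\neq 1$ since $e_j\neq e_k$, and $\lambda_{ij}\neq -1$ since $\lambda_{ij}\in\mathbb{F}_q^{\times 2}$ while $-1\notin\mathbb{F}_q^{\times 2}$. Hence $L_{\mathbb{F}_q}(t,j)=2$.

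The main technical point is the twist dichotomy in the second paragraph, i.e.\ confirming that the non-squareness of $e_j-e_i$ is exactly what switches the Legendre candidate from $E$ to its quadratic twist (flipping the trace). Once this is in hand, the exceptional $j$-invariants $0$ and $1728$ require no separate treatment: although the six $\lambda_{ij}$'s may coincide in pairs or triples, the count runs intrinsically on ordered pairs of $2$-torsion points, and the distinctness of the two good parameters is still forced by $\chi(-1)=-1$.
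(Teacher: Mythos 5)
Your proof is correct, and it reaches the count of $2$ by a genuinely different route from the paper. The paper works entirely on the $\lambda$-line: it fixes one Legendre parameter, runs through the six-element orbit $\{\lambda,1/\lambda,1-\lambda,1/(1-\lambda),\lambda/(\lambda-1),(\lambda-1)/\lambda\}$, computes the quadratic character of each element, and decides which of the associated quadratic twists are trivial; this forces separate arguments for $j=0$ and $j=1728$, where the orbit degenerates. You instead work upstream on the curve itself, parametrizing Legendre models by ordered pairs of rational $2$-torsion abscissae and reducing everything to the combinatorics of the tournament $i\to j$ iff $e_j-e_i\in\mathbb{F}_q^{\times 2}$; the identity $s_1+s_2+s_3=3$ forced by $\chi(-1)=-1$ then does all the work, and the exceptional $j$-invariants genuinely need no special treatment because you enumerate ordered pairs rather than $\lambda$-values. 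Two points are worth making explicit. First, the converse direction of your correspondence should be stated: every square $\lambda$ with $E_\lambda\cong E$ over $\mathbb{F}_q$ does arise from a ``good'' ordered pair, because an $\mathbb{F}_q$-isomorphism between two short Weierstrass models has the form $(x,y)\mapsto(u^2x+r,u^3y)$, carries $\{e_1,e_2,e_3\}$ to $\{0,1,\lambda\}$, and forces $e_j-e_i=u^{-2}$ to be a square; without this, you have only shown $L\le 2$ plus the existence of one contributing pair. Second, your dismissal of pairs with $e_j-e_i$ a nonsquare (``the twist has trace $-t$, hence does not contribute'') tacitly assumes $t\neq 0$ and that trace together with $j$-invariant determines a single $\mathbb{F}_q$-isomorphism class; the paper's own proof makes exactly the same tacit assumptions at the step ``both of these are nontrivial twists, and so do not have the same trace as $E_\lambda$,'' so this is not a defect relative to the target, but it is the one place where both arguments lean on the ordinary case.
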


\begin{proof}
First fix $t$ and $j$ and recall that there are exactly six $\lambda$ in $\overline{\mathbb{F}}_q \backslash \{0,1\}$ corresponding to each $j$-invariant not equal to 0 or 1728 (see \cite{silverman} Section III.1). What's more, given one such $\lambda$ we can find expressions for the other five by considering the orbit of $\lambda$ under the group generated by the transformations $\lambda \mapsto 1/\lambda$ and $\lambda \mapsto 1- \lambda$ on $\mathbb{P}^1$. We can write this set as 
\[ [\lambda] := \{\lambda, 1/\lambda, 1-\lambda, 1/(1-\lambda), \lambda/(\lambda -1), (\lambda-1)/\lambda\}.\]
Now suppose that $\lambda \in \mathbb{F}_q^{\times 2} \backslash \{1\}$ and that $E_\lambda$ has trace $t$ and $j$-invariant $j$. We first note that all other elements of $[\lambda]$ are also in $\mathbb{F}_q^\times\backslash \{1\}$. 
We must also check which are squares. If $\lambda$ is a square, then $1/\lambda$ is a square as well. Moreover, since $E_\lambda^{(\lambda)} \cong E_{1/\lambda}$ and $\lambda$ is a square, this twist is trivial and the trace and $j$-invariant are unchanged. Note that only one of $1-\lambda$ and $\lambda-1$ can be a square since $-1$ is not a square in $\mathbb{F}_q$. Then only one of $\{1-\lambda, 1/(1-\lambda)\}$ and $\{\lambda/(\lambda-1), (\lambda-1)/\lambda\}$ is a set of squares. 
One also has that $E_\lambda^{(-1)} \cong E_{1-\lambda}$ and $E_{1/\lambda}^{(-1)} \cong E_{(\lambda-1)/\lambda}$. Since $-1$ is not a square, both of these are nontrivial twists, and so do not have the same trace as $E_\lambda$. Then the only contributions to $L_{\mathbb{F}_q}(t,j)$ are $E_\lambda$ and $E_{1/\lambda}$. 

Now we deal with the two exceptional cases. First, suppose $E_\lambda$ has trace $t$ and satisfies $j(E_\lambda)=0$. Then $\lambda$ satisfies $\lambda^2-\lambda+1 = 0$ by the equation 
\[j(E_\lambda) = 2^8 \frac{(\lambda^2-\lambda+1)^3}{\lambda^2(\lambda-1)^2}.\]
Moreover, the other solution to $x^2-x+1=0$ is $1/\lambda$, so there are exactly two Legendre elliptic curves with trace $t$ and $j$-invariant 0. 

Finally, when $j(E_\lambda) = 1728$, we have that $\lambda \in \{-1, 2, 1/2\}$. We know that $-1$ is not a sqaure in our setting, so either both $2$ and $1/2$ are squares or neither are. Thus, $L_{\mathbb{F}_q}(t,j)=0$ or 2. 
\end{proof}

\section{Proofs of Theorems \ref{thm: classnumber} and \ref{thm: jellysize}}

\begin{proof}[Proof of Theorem 1.1]
We will demonstrate the argument for the case when $q \equiv 3 \bmod{8}$. The case when $q \equiv 7 \bmod{8}$ is the same except for the care needed to deal with the supersingular elliptic curves, for which one applies Lemma~\ref{lem: supersingular}. 
For $q \equiv 3 \bmod{8}$, the theorem is equivalent to proving that 
\[\frac{(q-1)(q-3)}{2} = \sum_{\substack{|t| \leq  2\sqrt{q} \\ t \equiv q+1 (8)}} 2(q-1) \cdot H\left( \frac{4q-t^2}{4}\right).
\]
The left hand side is the number of vertices in $\mathcal{J}_{\mathbb{F}_q}$, so it suffices to show the right hand side is also a count of the vertices in this graph. The sum runs over all the admissible traces of elliptic curves in $\mathcal{J}_{\mathbb{F}_q}$, so it suffices to show that each summand is the count of the number of vertices of trace $t$. Fixing $t$, each $j$ invariant defines a curve up to $\mathbb{F}_q$-isomorphism. The number of such $j$ is counted by $H\left(\frac{t^2-4q}{4}\right)$ by Theorem \ref{thm: hurwitz3mod8}. Corresponding to each isomorphism class, there are exactly 2 such $\lambda$ by Lemma~\ref{lem: lambdas}. Finally, by Theorem \ref{thm: counts} (2), each $\lambda$ appears in the graph with multiplicity $q-1$. 
\end{proof}

\begin{proof}[Proof of Theorem 1.3] 

By Theorem 3 (2) of \cite{jelly1}, every edge corresponds to the unique isogeny with kernel generated by $\langle (0, 0) \rangle$. In particular, this isogeny has degree 2. 

By Corollary 4 (1) of \cite{jelly1}, the groups $E(\mathbb{F}_q)$ are isomorphic for all $E$ on a jellyfish, and so their endomorphism rings are all the same; call it $\mathcal{O}$. This implies that this isogeny is the image of the class of an ideal $\mathfrak{p}_2$ of norm 2 in $\mathcal{O}$ with $E[\mathfrak{p}_2] = \langle(0,0)\rangle$. 
Since $E_{\lambda}$ and $E_{1/\lambda}$ are isomorphic over $\mathbb{F}_q$, the edge emanating from vertices corresponding to both of these curves must have the same target curve. In particular, every vertex at the end of a tentacle has a corresponding vertex in the cycle with the same $j$-invariant. Thus, it suffices to consider the action of $[\mathfrak{p}_2]$ on the cycles of the jellyfish with endomorphism ring $\mathcal{O}$. 
Since this action is faithful and all $j$-invariants in $\text{Ell}_\mathcal{O}(\mathbb{F}_q)$ are represented on some cycle, every cycle must have length divisible by the order of $[\mathfrak{p}_2]$ in $cl(\mathcal{O})$.  
Since the size of a jellyfish is twice the length of its cycle due to its tentacles of length one, our count must be multiplied by 2. 

To see that $m(\mathcal{J}) \cdot \# \mathcal{J} =2(q-1) \cdot h_2(\mathcal{O})$, note that the action of $[\mathfrak{p}_2]$ partitions the set of vertices into $[cl(\mathcal{O}):\langle[\mathfrak{p}_2]\rangle]$ subsets of size $2(q-1) \cdot h_2(\mathcal{O})$ by Theorem \ref{thm: classnumber}. 
Now if $n$ is the order of $[\mathfrak{p}_2]$ in $cl(\mathcal{O})$, then $[\mathfrak{p}_2^n]\cdot E_{\lambda} = E_{\lambda}$, and so if $\lambda = b^2/a^2$, the $n$th pair in the sequence of $\text{AGM}_{\mathbb{F}_q}(a,b)$ is $(ka,kb)$ for some $k$ in $\mathbb{F}_q^\times$. Then if $\text{ord}(k)$ denotes the multiplicative order of $k$ in $\mathbb{F}_q$, each jellyfish must have size $2\cdot h_2(\mathcal{O}) \cdot \text{ord}(k)$ and multiplicity $(q-1)/\text{ord}(k)$ by partitioning by the orbits of the action of $\langle k \rangle$. 

\end{proof}

\bibliographystyle{amsplain}
\bibliography{Hyper_Jelly_Draft.bib}

\end{document}